\renewcommand{\PrintDOI}[1]{\href{http://dx.doi.org/\detokenize{#1}}{doi: \detokenize{#1}}%
  \IfEmptyBibField{pages}{, (to appear in print)}{}}
\def\commutatif{\ar@{}[rd]|{\circlearrowleft}}
\newcommand{\eq}[1][r]
   {\ar@<-3pt>@{-}[#1]
    \ar@<-1pt>@{}[#1]|<{}="gauche"
    \ar@<+0pt>@{}[#1]|-{}="milieu"
    \ar@<+1pt>@{}[#1]|>{}="droite"
    \ar@/^2pt/@{-}"gauche";"milieu"
    \ar@/_2pt/@{-}"milieu";"droite"}
\def\dar[#1]{\ar@<2pt>[#1]\ar@<-2pt>[#1]}
\newcommand{\bigon}[4][r]{
    \ar@/^1pc/[#1]^{#2}_*=<0.3pt>{}="HAUT"
    \ar@/_1pc/[#1]_{#3}^*=<0.3pt>{}="BAS"
    \ar@{=>} "HAUT";"BAS" ^{#4}
  }
\newcommand{\bigons}[6][r]{  
    \ar@/^2pc/[#1]^{#2}_*=<0.3pt>{}="HAUT"
    \ar@{}    [#1]     ^*=<0.3pt>{}="MILIEUHAUT"
                       _*=<0.3pt>{}="MILIEUBAS"
    \ar[#1]_(0.3){#3}                  
    \ar@/_2pc/[#1]_{#4}^*=<0.3pt>{}="BAS"
    \ar@{=>} "HAUT";"MILIEUHAUT" ^{#5}
    \ar@{=>} "MILIEUBAS";"BAS" ^{#6}
  }
\newtheorem{thm}{Theorem}[section]
\newtheorem{pro}[thm]{Proposition}
\newtheorem{lem}[thm]{Lemma}
\newtheorem{cor}[thm]{Corollary}
\theoremstyle{definition}
\newtheorem{df}[thm]{Definition}
\newtheorem{dfpro}[thm]{Definition and Proposition}
\newtheorem{dflem}[thm]{Definition and Lemma}
\theoremstyle{remark}
\newtheorem{rmk}[thm]{Remark}
\newtheorem{ex}[thm]{Example}
\newtheorem{exs}[thm]{Examples}
\newcommand\id{1}
\newcommand{\Id}{\operatorname{id}}
\newcommand{\R}{\mathbb{R}}
\newcommand{\Z}{\mathbb{Z}}
\newcommand\rTo{\longrightarrow}
\newcommand\mto{\longmapsto}
\newcommand\Rinjection{\hookrightarrow}
\newcommand\rRack{\triangleleft}
\newcommand\Rack{\diamond}
\let\cal\mathcal
\let\cat\mathfrak
\let\bb\mathbb
\def\a{\alpha}
\def\s{\sigma}
\def\vp{\varphi}
\title{Foundations of topological racks and quandles}
\author{Mohamed Elhamdadi} 
\address{Department of Mathematics, 
University of South Florida, Tampa, FL 33620, U.S.A.} 
\email{emohamed@math.usf.edu} 
\author{El-ka\"ioum M. Moutuou} 
\address{School of Mathematics, 
University of Southampton
Highfield
Southampton
SO17 1BJ
UK} 
\email{e.mohamedmoutuou@soton.ac.uk}
\dedicatory{Dedicated to Professor J\'ozef H. Przytycki for his 60th birthday}
\begin{document}

\maketitle

\begin{abstract}
We give a foundational account on topological racks and quandles. Specifically, we define the notions of ideals, kernels, units, and inner automorphism group in the context of topological racks. Further, we investigate topological rack modules and principal rack bundles. Central extensions of topological racks are then introduced providing a first step towards a general continuous cohomology theory for topological racks and quandles. 
\end{abstract}

\tableofcontents

\section{Introduction}
Quandles are non-associative algebraic structures (with the exception of the trivial quandles) that correspond to the axiomatization of the three Reidemeister moves in knot theory. Since 1982 when quandles were introduced by Joyce~\cite{Joyce} and Matveev~\cite{Matveev} independently, there have been investigations, (see for example \cite{PrzyYang, PrzSik, NP, Przy, EN, Nelson, Nosaka1}), that have mostly focused on {\em finite} quandles. Joyce and Matveev proved that the {\it fundamental} quandle of a knot is a complete invariant up to orientation.  Precisely, given two knots $K_0$ and $K_1$, the fundamental quandle $Q(K_0)$ is isomorphic to the fundamental quandle $Q(K_1)$  if and only if $K_1$ is equivalent to $K_0$ or $K_1$ is equivalent to the reverse of the mirror image of $K_0$.  Quandles have been used by topologists to construct invariants of knots in the 3-space and knotted surfaces in 4-space.  We mention the following two examples of invariants: (1) the set of colorings of a given knot by a quandle (see \cite{CESY} for example), (2) state sum invariants of knots and knotted surfaces coming from quandle cohomology \cite{CES, CJKLS, Nosaka}. Topological quandles were considered in 2007 by Rubinsztein~\cites{Rubinsztein:Top_Quandles}.  He extended the notion of coloring of a knot or link by a quandle to include topological quandles.  He showed that the coloring space of the link is a topological space (defined up to a homeomorphism). Jacobsson and Rubinsztein \cite{Jacobsson-Rubinsztein} computed the space of colorings of all prime knots with up to seven crossings and of all $(2,n)$-torus links. They also observed some similarities between the space of colorings of knots and Khovanov homology for all prime knots with up to seven crossings and for at least some eight-crossing knots.    \\

In this paper, we introduce the foundational material to investigate topological racks and quandles. In section 2, we review the basics of topological racks and then introduce the notion of units in a topological racks. These form a space that can be thought of as a generalisation of the center of a topological group (cf. Proposition~\ref{pro:units_center}). The inner automorphism group of a topological quandle is constructed in section 3 and its topology is discussed. 
In section 4 we introduce the notions of ideal and kernel for topological racks and we give some first foundamental results. 
We go further by exploring in Section 5 modules and rack group bundles over topological racks which are crucial to the study of central extensions of topological racks we define in Section 6. We then form an abelian group out of such extensions that outlines a general continuous cohomology theory for topological racks. This will appear in a subsequent paper~\cite{Elhamdadi-Moutuou:Continuous_Cohomology}.

\section{Racks and quandles}

Recall~\cites{Rubinsztein:Top_Quandles, Etingof-Grana:Rack_cohomology, Carter-Crans-Elhamdadi-Saito} that a 
 {\it rack} is a set $X$ provided with a binary operation 
\[
\begin{array}{lccc}
\rRack: & X\times X & \rTo &X \\
 & (x,y) & \mto & x\rRack y
\end{array}
\] such that 
\begin{itemize}
\item[(i)] for all $x,y\in X$, there is a unique $z\in X$ such that $y=z\rRack x$;
\item[(ii)]({\it right distributivity}) for all $x,y,z\in X$, we have $(x\rRack y)\rRack z=(x\rRack z)\rRack (y\rRack z)$.
\end{itemize}

Observe that property (i) also reads that for any fixed element $x\in X$, the map $R_x:X\ni y\mto y\rRack x\in X$ is a 
bijection. 
Also, notice that the distributivity condition is equivalent to the relation $R_x(y\rRack z)=R_x(y)\rRack R_x(z)$ for all $y,z\in X$.

A {\it topological rack} is a 
 rack $X$ which is a topological space such that the map $X\times X\ni (x,y)\mto x\rRack y\in X$ is a continuous. In a topological rack, the right multiplication $R_x:X\ni y\mto y\rRack x\in X$ is a homeomorphism, for all $x\in X$. Observe that an ordinary (finite) rack is automatically a topological rack with respect to the discrete topology.


\begin{df}
A {\it quandle} (resp. {\it topological quandle}) is a rack (resp. topological rack) such that 
 $x\rRack x=x,\forall x\in X$. 
\end{df}

\begin{rmk}
Suppose that a set (resp. a topological space) $X$ is equipped with a binary operation $\Rack:X\times X\ni (x,y)\mto x\Rack y \in X$ 
that is right and left distributive at the same time. Then $(X,\Rack)$ is a quandle (resp. topological quandle). Indeed, for all $x\in X$, we have 
\[
R_{x\Rack x}(x)=x\Rack (x\Rack x)=(x\Rack x)\Rack (x\Rack x)=R_{x\Rack x}(x\Rack x),
\] 
which implies that $x\Rack x=x$.
\end{rmk}

\begin{ex}[The conjugation quandle]
 Let $G$ be a topological group. The operation $$x\rRack y=yxy^{-1}$$ makes $G$ into a topological quandle which is denoted by $Conj(G)$ and is called the {\em conjugation quandle of $G$}. In fact, any conjugacy class of $G$ is a topological quandle with this operation.
\end{ex}

\begin{ex}[The core quandle]
 Let $G$ be a topological group. The operation $$x\rRack y=yx^{-1}y$$ defines a topological quandle structure on $G$. This quandle will be denoted by $Core(G)$ and we call it the {\em core of $G$}. Observe that this operation satisfies $(x\rRack y)\rRack y=x$. Any quandle in which this equation is satisfied is called an {\it involutive} quandle.
 \end{ex}

\begin{ex}[Symmetric manifold]
First recall that a symmetric manifold $M$ is a Riemannian manifold such that each point $x \in M$ is an isolated fixed point of an involtutive isometry $i_x:M \rightarrow M$.   Given such manifold, every $x\in M$ endows $M$ with the structure of topological quandle by setting $x \rRack  y=i_y(x)$.
\end{ex}

\begin{ex}
 Let $S^n$ be the unit sphere of $\bb R^{n+1}$. Then, with respect to the operation  $$x \rRack y=2(x \cdot y)y-x, x,y\in S^n,$$ where $x\cdot y$ is the usual scalar product in $\bb R^{n+1}$, and the topology inherited from $\bb R^{n+1}$, $S^n$ is a topological quandle.
\end{ex}

\begin{ex}
Following the previous example, let $\lambda$ and $\mu$ be real numbers, and let $x,y \in S^n$. Then $$\lambda x \rRack \mu y=\lambda[2{\mu}^2(x\cdot y)y-x].$$ 
In particular, the operation $$\pm x \rRack  \pm y=\pm (x \rRack y)$$ provides a structure of topological quandle on the projective space $\mathbb{RP}^n$.
\end{ex}

\begin{ex}
Let $G$ be a topological group and $\sigma$ be a homeomorphism of $G$.  Let $H$ be a closed subgroup of $G$ such that $\sigma(h)=h,$ for all $ h \in H$.  Then $G/H$ is a quandle with operation $$[x]\rRack [y]:=[\s(x)\s(y)^{-1}y],$$ where for $x\in G$, $[x]$ denotes the class of $x$ in $G/H$. For example, one can consider the group $G$ to be the group of rotations $G=SO(2n+1)$,  $H=SO(2n)$ and $G/H=S^{2n+1}$.
\end{ex}

\begin{df}\label{stu}
Let $X$ be a topological rack or quandle. An element $u\in X$ is
\begin{enumerate}
\item a {\it stabiliser} if $x\rRack u=x$, for all $x\in X$;
\item {\it totally fixed} in $X$ if $u \rRack x=u$, for all $x\in X$;
\item a {\it unit} if $u$ is a stabiliser and is totally fixed in $X$.
\end{enumerate}
The set of all stabilisers of $X$ (resp. all totally fixed points in $X$) is denoted by $Stab(X)$ (resp. $Fix(X)$)
\end{df}

Observe that if $u$ is a stabiliser in the rack $X$, we have $u \rRack u=u$. Moreover, if $u$ is a unit, then $(x\rRack u)\rRack y=x\rRack y$ for all $x,y\in X$.

\begin{lem}
Assume the topological rack $X$ admits a non-empty set of units. Then for all arbitrary pair of units $u,v$ we have 
\[
u\rRack v=u, \ {\rm \ } v\rRack u=v.
\]
\end{lem}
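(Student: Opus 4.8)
The plan is simply to unwind the definition of a unit (Definition~\ref{stu}). By hypothesis there is at least one unit, so fix an arbitrary pair $u,v$ of units. Each of them is, in particular, \emph{totally fixed}, which means $u\rRack x=u$ and $v\rRack x=v$ for every $x\in X$.

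First I would specialise the total-fixedness of $u$ to the element $x=v$, obtaining $u\rRack v=u$. Symmetrically, specialising the total-fixedness of $v$ to $x=u$ gives $v\rRack u=v$. This already establishes both identities, and it is worth emphasising that no continuity or topological hypothesis is used — the statement is purely algebraic, so the topology on $X$ plays no role.

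As an internal consistency check I would observe that the \emph{stabiliser} half of the definition yields the same two equations by a complementary route: since $v$ is a stabiliser, $x\rRack v=x$ for all $x\in X$, whence $u\rRack v=u$; and since $u$ is a stabiliser, $x\rRack u=x$ for all $x\in X$, whence $v\rRack u=v$. Thus either clause of the unit condition alone already forces the conclusion. There is essentially no obstacle to overcome here: the only point requiring attention is to invoke the correct clause of Definition~\ref{stu} for each equality. The lemma is best read as a sanity check confirming that distinct units interact trivially, preparing the ground for a systematic treatment of $Stab(X)$ and $Fix(X)$.
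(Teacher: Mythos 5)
Your proof is correct and takes essentially the same approach as the paper, which likewise just unwinds clauses (1) and (2) of Definition~\ref{stu}. Your observation that either clause alone suffices is a nice bonus but changes nothing substantive.
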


\begin{proof}
Indeed, if $u$ and $v$ are units in $X$, then by (1) and (2) in the definition \ref{stu}, we have $u\rRack v=u$ and $u\rRack v=u$.
\end{proof}

\begin{df}
The set of all units in a topological racks or quandle $X$ is denoted by $\cal U_X$. We say that $X$ is {\it unital} if $\cal U_X$ is non-empty.
\end{df}

\begin{ex}\label{ex:units_vs_centre}
Let $G$ be a topological group. Then it is easy to check that $\cal U_{Conj(G)}$ is exactly the centre $Z(G)$ of $G$.
\end{ex}

\begin{ex}[Topological Linear rack]\label{TLR}
Let $G$ be a topological group and $V$ a continuous representation; {\em i.e.}, there is a continuous map $$G\times V\ni (g,v)\mto  g\cdot u\in V$$ with $g\cdot (h\cdot v)=(gh)\cdot v, \;$ for all $g,h\in G, v\in V$. We define a topological structure on $G\times V$ as follows:
\[
(g,u)\rRack (h,v):= (h^{-1}gh, h^{-1}\cdot u), \ g,h\in G, u,v\in V.
\]
We denote this rack as $G\ltimes V$. Observe that this rack is unital and $(1,0)$ is a unit.
\end{ex} 

The following proposition is immediate.

\begin{pro}
Let $G$ be a topological group and $V$ a countinuous representation through the map $\pi:G\rTo GL(V)$. Denote by $V^G$ the subspace of $V$ consisting of invariant vectors under the continuous $G$-action. Then we have 
\[
Stab(G\ltimes V) \cong [Z(G)\cap \ker(\pi)] \times V, \ Fix(G\ltimes V)\cong Z(G)\times V^G, 
\]
and 
\[
\cal U_{G\ltimes V}\cong [Z(G)\cap \ker(\pi)]\times V^G.
\]
\end{pro}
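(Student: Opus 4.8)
The plan is to read off each of the three sets directly from the defining conditions in Definition~\ref{stu}, using the explicit formula for the rack operation on $G\ltimes V$. Throughout I write the $G$-action on $V$ as $g\cdot v=\pi(g)(v)$, so that $g\cdot(h\cdot v)=(gh)\cdot v$ and $g^{-1}\cdot v=\pi(g)^{-1}(v)$.

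First I would determine $Stab(G\ltimes V)$. An element $(h,v)$ is a stabiliser exactly when $(g,u)\rRack(h,v)=(g,u)$ for every $(g,u)\in G\ltimes V$; since $(g,u)\rRack(h,v)=(h^{-1}gh,\,h^{-1}\cdot u)$, this amounts to the two requirements $h^{-1}gh=g$ for all $g\in G$ and $h^{-1}\cdot u=u$ for all $u\in V$. The first says $h\in Z(G)$, while the second says $\pi(h)^{-1}=\Id$, i.e. $\pi(h)=\Id$, which is precisely $h\in\ker(\pi)$; note that the component $v$ does not appear in the output and so is unconstrained. Hence $(h,v)$ is a stabiliser iff $h\in Z(G)\cap\ker(\pi)$ and $v\in V$, which gives the stated identification.

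Next I would determine $Fix(G\ltimes V)$ in the same way. Here $(h,v)$ is totally fixed iff $(h,v)\rRack(g,u)=(h,v)$ for every $(g,u)$, and since $(h,v)\rRack(g,u)=(g^{-1}hg,\,g^{-1}\cdot v)$ this means $g^{-1}hg=h$ for all $g\in G$ and $g^{-1}\cdot v=v$ for all $g\in G$. The first condition is again $h\in Z(G)$, and the second says that $v$ is fixed by the whole $G$-action, i.e. $v\in V^G$. This yields $Fix(G\ltimes V)\cong Z(G)\times V^G$.

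Finally, a unit is by definition an element that is simultaneously a stabiliser and totally fixed, so $\cal U_{G\ltimes V}=Stab(G\ltimes V)\cap Fix(G\ltimes V)$; intersecting the two descriptions componentwise keeps the stronger constraint on each factor, namely $h\in Z(G)\cap\ker(\pi)$ (which already lies in $Z(G)$) and $v\in V^G$ (which already lies in $V$), giving $\cal U_{G\ltimes V}\cong[Z(G)\cap\ker(\pi)]\times V^G$. No step here is a genuine obstacle: the only point requiring a word of care is that the symbol $\cong$ should be read as a homeomorphism, and this is automatic because each of the three sets is the subspace of $G\times V$ cut out by independent conditions on the two coordinates, so its subspace topology coincides with the product of the subspace topologies on the corresponding factors.
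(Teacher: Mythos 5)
Your proof is correct and is exactly the direct verification the paper has in mind: the paper omits the argument entirely, stating only that the proposition is immediate, and your unwinding of the two defining conditions of Definition~\ref{stu} against the formula $(g,u)\rRack(h,v)=(h^{-1}gh,\,h^{-1}\cdot u)$ is the intended reasoning. The closing remark about the identifications being homeomorphisms is a sensible touch that the paper leaves implicit.
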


\begin{df}
Let $X$ and $Y$ be topological racks. A {\it rack morphism} from $X$ to $Y$ is a continuous map $f:X\rTo Y$ such that $f(x\rRack y)=f(x)\rRack f(y)$, for all $x,y\in X$. Morphisms of topological quandles are defined in the same way. Isomorphisms of racks or quandles are defined accordingly. If $Y$ is unital, then $f$ is said to be {\it unital} if $f(\cal U_X)\subseteq \cal U_Y$.
\end{df}

\begin{ex}
Given a topological rack $X$, each element $x\in X$ defines a rack automorphism through $R_x:X\ni y\mto y\rRack x\in X$. Moreover, if $X$ is unital, $R_x$ is a unital morphism.
\end{ex}

\begin{pro}\label{pro:units_center}
Let $G$ be a topological group. Then every unit element in $Core(G)$ is a $2$-torsion of the group $G$. In particular, if $G$ is torsion free, $\cal U_{Core(G)}$ is empty. 
\end{pro}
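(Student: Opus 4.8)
The plan is to unwind the definition of a unit (Definition~\ref{stu}) for the core operation $x\rRack y=yx^{-1}y$ and read off the resulting constraint on $u$. A unit is in particular totally fixed, so $u\rRack x=u$ for every $x\in G$; spelling this out gives $xu^{-1}x=u$ for all $x$. The entire first assertion then follows by evaluating this single identity at the group identity $x=e$: one gets $e\,u^{-1}\,e=u$, i.e.\ $u^{-1}=u$, hence $u^{2}=e$, which is exactly the statement that $u$ is a $2$-torsion element. Equivalently, one may use that a unit is a stabiliser, so $ux^{-1}u=x$ for all $x$; at $x=e$ this again yields $u^{2}=e$.

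For the second assertion I would argue by elimination. If $G$ is torsion free, then $u^{2}=e$ forces $u=e$, so the only possible unit is the identity. It therefore remains to check that $e$ is not a unit of $Core(G)$ when $G$ is nontrivial: the stabiliser condition for $u=e$ reads $x^{-1}=x$ for all $x\in G$, which holds only if every element of $G$ squares to the identity, and this is impossible for a nontrivial torsion-free group. Hence $e$ fails to be a stabiliser, a fortiori fails to be a unit, and $\cal U_{Core(G)}=\emptyset$.

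I do not anticipate any real obstacle: once the core operation is substituted into Definition~\ref{stu}, everything reduces to evaluating group identities at $x=e$. The only point demanding care is the bookkeeping around the identity element in the torsion-free statement, together with the degenerate case $G=\{e\}$ (which is torsion free yet has $e$ as a unit of $Core(G)$), so the ``in particular'' is best understood for nontrivial $G$. It is worth noting in passing that combining both defining properties of a unit is strictly stronger than $u^{2}=e$ --- it forces conjugation by $u$ to invert every element, hence $G$ to be an elementary abelian $2$-group --- but only the weaker consequence $u^{2}=e$ is needed here.
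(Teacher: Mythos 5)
Your proof is correct, and it is exactly the direct computation the authors intend (the paper in fact states this proposition without a written proof): substituting the core operation into Definition~\ref{stu} and evaluating the totally-fixed identity $xu^{-1}x=u$, or the stabiliser identity $ux^{-1}u=x$, at $x=e$ immediately gives $u^{2}=e$. Your caveats --- the trivial-group degeneracy of the ``in particular,'' and the observation that the full unit axioms actually force $G$ to be a Boolean group --- are accurate but do not affect the substance of the statement.
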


For instance, $Core(\bb R)$ has no units.

\begin{ex}
The classical map $f: \R \rightarrow S^1$ given by $f(t)=e^{2i\pi t}$ is a quandle homomorphism from $\R$ with the binary operation $t \rRack t'=2t'-t$ to the quandle $S^1$ with operation  $z \rRack z'=z' z^{-1} z'$.
\end{ex}

\begin{dflem}\label{dflem:unitarization} 
Let $X$ be a non-unital topological rack. Define the 
{\it unitarization} $X^+$ of $X$ by adding a one point set $\{\id\}$ to $X$ and declaring that $x\rRack \id=x$ and $\id\rRack x=\id$ for all $x\in X$ and endowing it with the topology induced from the inclusion map $X\ni x\mto x\rRack \id \in X^+$. Then $X^+$ is a unital topological rack .
 Moreover, the inclusion $X\Rinjection X^+$ is an injective morphism of topological racks.
\end{dflem}

\begin{proof}
Straightforward.
\end{proof}

\begin{rmk}
Notice that $u\in X$ is a stabiliser if and only if $R_u$ is the identity morphism of racks $X\rTo X$. Further, $u$ is totally fixed if and only if it is a fixed point of $R_x$ for every $x\in X$. It follows that in the Definition and Lemma~\ref{dflem:unitarization}, we have changed nothing in the "structure" of $X$ since the added unit $1$ may be identified with the identity morphism of the racks $\Id: X\rTo X$ 
and be considered as a fixed point of all of the morphisms $R_x$.
\end{rmk}

\section{Inner automorphism group}

Let $X$ be a topological rack. Notice that if $f,g:X\rTo X$ are (continuous) rack morphisms then so is $fg:=f\circ g$. If moreover $f$ and $g$ are rack automorphisms ({\it i.e.}, rack homeomorphisms), then so is $fg$. The set $Aut(X)$ of rack automorphisms forms a group under composition. Furthermore, when equipped with the compact-open topology, $Aut(X)$ is a topological group. Recall that the right translation $R_x:X\rTo X$ is an automorphism of topological rack.

\begin{pro}\label{pro:inner_representation}
Define the \emph{inner representation} of  $X$ to be the map 
\[
\begin{array}{lccc}
R: & X & \rTo &Aut(X)\\
 & x & \mto & R_x
\end{array}
\]
Then $R$ is continuous. Moreover, for all $x,x'\in X$, we have 
\[
R_xR_{x'}(\cdot)=R_x(\cdot)\rRack R_x(x'). 
\]
\end{pro}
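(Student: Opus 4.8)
The plan is to treat the two assertions separately, since the algebraic identity is purely formal while the continuity statement carries the real content.

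For the identity I would simply unwind both sides on an arbitrary $y\in X$. The left-hand side gives $R_xR_{x'}(y)=R_x(y\rRack x')=(y\rRack x')\rRack x$, while the right-hand side gives $R_x(y)\rRack R_x(x')=(y\rRack x)\rRack(x'\rRack x)$. Right distributivity (axiom (ii)) under the substitution $(x,y,z)\mapsto(y,x',x)$ equates these two expressions, which is exactly the claim. Equivalently, one invokes the observation recorded just after the axioms that each $R_x$ is itself a rack morphism, whence $R_xR_{x'}(y)=R_x(y\rRack x')=R_x(y)\rRack R_x(x')$ immediately. Note also that $R$ lands in $\Aut(X)$ at all, since each $R_x$ was already recalled to be a rack automorphism.

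For continuity I would argue directly from a subbasis of the compact-open topology. Recall that the sets $W(K,U)=\{\,f\in\Aut(X):f(K)\subseteq U\,\}$, with $K\subseteq X$ compact and $U\subseteq X$ open, form a subbasis, so it suffices to show that each preimage $R^{-1}(W(K,U))=\{\,x\in X: k\rRack x\in U \text{ for all } k\in K\,\}$ is open. Fix $x_0$ in this set. For every $k\in K$ we have $k\rRack x_0\in U$, so by continuity of the operation $\rRack\colon X\times X\to X$ there are open sets $A_k\ni k$ and $B_k\ni x_0$ with $A_k\rRack B_k\subseteq U$. The family $\{A_k\}_{k\in K}$ is an open cover of the compact set $K$, hence admits a finite subcover $A_{k_1},\dots,A_{k_n}$; setting $V=B_{k_1}\cap\cdots\cap B_{k_n}$ gives an open neighbourhood of $x_0$, and for any $x\in V$ and any $k\in K$, choosing $i$ with $k\in A_{k_i}$ yields $k\rRack x\in A_{k_i}\rRack B_{k_i}\subseteq U$. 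Thus $V\subseteq R^{-1}(W(K,U))$, so this preimage is open and $R$ is continuous.

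The main obstacle I anticipate is precisely this continuity step: converting the pointwise conditions $k\rRack x_0\in U$ into a single neighbourhood $V$ of $x_0$ that works uniformly over all $k\in K$. This is exactly where compactness of $K$ enters, through the finite subcover (a tube-lemma type passage); without it one only controls each $k$ separately. In fact the whole argument is a special case of the easy half of the exponential law for the compact-open topology, applied to the continuous swapped operation $(x,y)\mapsto y\rRack x$, and if one is content to cite that adjunction the continuity follows at once; I would nonetheless keep the short self-contained compactness argument above. Finally I would note that the topology on $\Aut(X)$ used here is the subspace compact-open topology inherited from $C(X,X)$, so that continuity of $R$ as a map into $C(X,X)$ is what is being verified.
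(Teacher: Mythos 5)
Your proof is correct and follows essentially the same route as the paper: the algebraic identity is verified by right distributivity exactly as in the paper, and your continuity argument (covering $K$ by the open sets $A_k$, extracting a finite subcover, and intersecting the corresponding neighbourhoods $B_{k_i}$ of $x_0$) is precisely the content of the paper's Lemma~\ref{lem:inner_representation}, merely inlined rather than stated separately. The only cosmetic difference is that you correctly call the sets $W(K,U)$ a subbasis where the paper calls them a basis; this does not affect the argument.
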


We shall note that the compact-open topology has basis open sets 
\[
W(K,U):=\left\{f:X\rTo X \ {\rm rack \ homomorphism} \mid f(K)\subset U \right\},
\]
where $K\subset X$ is compact and $U\subset X$ is open. We then need the following lemma to prove the proposition \ref{pro:inner_representation}.

\begin{lem}\label{lem:inner_representation}
Let $X$ be a topological rack and let $K$ and $U$ be compact and open subsets of $X$, respectively. Suppose there exists $x\in X$ such that $K\rRack x\subset U$. Then there is an open neighbourhood $V$ of $x$ such that $K\rRack V\subset U$.
\end{lem}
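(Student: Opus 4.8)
The plan is to recognize this as an instance of the classical \emph{tube lemma} applied to the continuous rack operation $\rRack:X\times X\rTo X$. The hypothesis $K\rRack x\subset U$ says precisely that the slice $K\times\{x\}$ is contained in the open set $N:=\rRack^{-1}(U)\subset X\times X$, which is open by continuity of $\rRack$. The goal is to thicken $\{x\}$ to an open neighbourhood $V$ while keeping the product $K\times V$ inside $N$; translating back through $\rRack$ gives $K\rRack V\subset U$.

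To carry this out, I would first fix, for each $k\in K$, the point $(k,x)\in N$. Since $N$ is open in the product topology, there is a basic open neighbourhood $A_k\times B_k\subset N$ with $k\in A_k$ and $x\in B_k$. The family $\{A_k\}_{k\in K}$ is then an open cover of the compact set $K$, so I extract a finite subcover $A_{k_1},\dots,A_{k_n}$ indexed by finitely many points $k_1,\dots,k_n\in K$. I then set
\[
V:=\bigcap_{i=1}^{n}B_{k_i},
\]
which is a finite intersection of open sets each containing $x$, hence an open neighbourhood of $x$.

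The verification is then immediate: given any $k\in K$ and any $v\in V$, compactness guarantees $k\in A_{k_i}$ for some index $i$, while $v\in V\subset B_{k_i}$, so $(k,v)\in A_{k_i}\times B_{k_i}\subset N=\rRack^{-1}(U)$. Applying $\rRack$ yields $k\rRack v\in U$, and since $k$ and $v$ were arbitrary this shows $K\rRack V\subset U$, as required.

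The only place where any care is needed is the use of compactness of $K$ to pass from the pointwise neighbourhoods $B_k$ (which a priori depend on $k$ and could shrink as $k$ varies) to a single uniform neighbourhood $V$ of $x$ that works for \emph{all} of $K$ at once; this is exactly the content of the tube lemma and is the step that would fail for non-compact $K$. Everything else is a routine unwinding of the product topology and the definition of $\rRack^{-1}(U)$, so I expect no genuine obstacle beyond setting up the finite subcover correctly.
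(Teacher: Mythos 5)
Your proof is correct and follows essentially the same route as the paper's: both use continuity of $\rRack$ to produce, for each $k\in K$, a basic product neighbourhood of $(k,x)$ mapping into $U$, then extract a finite subcover of $K$ and intersect the corresponding neighbourhoods of $x$. This is exactly the tube-lemma argument the authors carry out.
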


\begin{proof}
Since the rack operation $X\times X\ni (y,x)\mto y\rRack x\in X$ is a continuous map and $U$ open, there exit open neighbourhoods $\widetilde{V}_{x,y}$ and $V_{x,y}$ of $y$ and $x$, respectively, such that $$\widetilde{V}_{x,y}\rRack V_{x,y}\subset U.$$
Now, for a fixed $x\in X$, the family $\{\widetilde{V}_{x,y}\}_{y\in K}$ is an open cover of the compact subset $K\subset X$. Hence, there is a finite set $\{y_0,\cdots, y_n\}\subset K$ such that $$K\subset \bigcup_{k=0}^n\widetilde{V}_{x,y_k}, \ {\rm and\ } \widetilde{V}_{x,y_k}\rRack V_{x,y_k}\subset U.$$
It is straightforward that the open neighbourhood $$V_x:=\bigcap_{k=0}^nV_{x,y_k}$$ of $x$ satisfies $K\rRack V_x\subset U$. 
\end{proof}

\begin{proof}[Proof of Proposition~\ref{pro:inner_representation}]
Let $W(K,U)$ be an open subset in $Aut(X)$. Then thanks to Lemma~\ref{lem:inner_representation}, if $x$ is in the inverse image of $W(K,U)$ by $R$, there is an open neighbourhood $V_x$ such that $V_x\subset R^{-1}(W(K,U))$; hence, $R^{-1}(W(K,U))$ is open in $X$ and $R$ is then continuous. For the second statement, we have $$R_xR_{x'}(y)=(y\rRack x')\rRack x=(y\rRack x)\rRack (x'\rRack x)=R_x(y)\rRack R_x(x'), \forall y\in X.$$
\end{proof}

\begin{df}
We define the {\it inner automorphism group} $Inn(X)$ of $X$ to be the closure of the subgroup generated by the image of $X$ by $R$ in $Aut(X)$; {\it i.e.}, 
\[
Inn(X):= \overline{<R(X)>}\subset Aut(X).
\]
\end{df}

Recall that for any quandle endomomorphism $f$ of $X$, we have $f \;R_x=R_{f(x)}\; f$.  Then  $Inn(X)$ is a normal subgroup of $Aut(X)$ as the closure of a normal subgroup.  With the quotient topology, $Aut(X)/Inn(X)$ is a topological group.  Also, since $R$ is continuous, if $X$ is compact, then $Inn(X)$ is a compactly generated group.

\begin{ex}
Consider again the core of $\R$. Then $Aut(Core(\bb R))$ is the affine group $Aff(\R)=\{ 
\begin{pmatrix}
 a & b \\
  0 & 1
 \end{pmatrix}, 0 \neq a, b \in \R  \}$ and the inner group $Inn(Core(\R))=\R$.
\end{ex}

\begin{ex} 
Let $M (\neq I_2)$ be an invertible two-by-two matrix over the integers $\Z$ (i.e. $det(M)=\pm 1$), where $I_2$ is the identity matrix, and assume that $M^2 \neq I_2$.  The plane $\R^2$ becomes a topological quandle with the operation $x \rRack y=Mx +(I_2-M)y$.  It is easily seen that this map is compatible with the projection of $\R^2 \rightarrow \R^2/\Z^2$.  Let $m$ and $n$ be two vectors of $\Z^2$.  We have $(x + m) \rRack (y+n)= x \rRack y + m \rRack n $.  Since $m \rRack n  \in \Z^2$, we obtain a quandle operation on the torus $T^2=S^1 \times S^1$.
Lets compute the automorphism group $Aut(T^2)$.  First, one notices that any function $f_{A,B}$ on $\R^2$ such that $f_{A,B}(x)=Ax+B$ with the condition $MA=AM$ is a quandle homomorphism.   Thus if $A \in GL_2(\R)$ and $MA=AM$, then $f_{A,B}$ is an automorphism of the quandle $\R^2$.  In fact we claim that the converse is also true.  Precisely if $f$ is a quandle automorphism and we consider the function $g(x)=f(x)-f(0)$.  Then $g(0)=0$ and $g$ satisfies the equation $g(Mx +(I_2-M)y)=Mg(x) +(I_2-M)g(y)$.  In particular $g(Mx)=Mg(x)$, and thus $g$ will be of the form $g(x)=\lambda x$, where $\lambda \in GL_2(\R)$ and $\lambda M=M \lambda$.   Thus $Aut(T^2)$ is the subgroup of the affine group $Aff(\R^2)$ of elements of the form $f_{A,B}$ for which $A$ commute with $M$ and the inner group $Inn(T^2)=\R^2$.  Obviously this example can be generalised to an $n$-torus with $n\geq 2$.
\end{ex}

\section{Ideals and Kernels}

In this section, we generalise the notion of {\it ideals} to the category of topological racks and quandles.

\begin{df}
Let $X$ be a topological racks (resp. quandle). A {\it subrack} (resp. {\it subquandle}) of $X$ is a topological subspace $Y\subset X$ such that $x\rRack y\in Y$ whenever $x,y\in Y$. A subrack or subquandle is {\it closed} (resp. {\it open} if it is closed (resp. open) as a subspace of a topological space.
\end{df}

Notice that a subrack (resp. subquandle) $Y$ of $X$ is a rack (resp. quandle) in its own. Moreover, we have the following straightforward observation.

\begin{dflem}
Let $Y$ be a subrack of $X$. Let 
\[X\rRack Y:=\{x\rRack y, \ x\in X, y\in Y\}, \ Y\rRack X:=\{y\rRack x, \ y\in Y, x\in X\}.
\]
Then the operation $$(y_1\rRack x_1)\star (y_2\rRack x_2):= (y_1\rRack y_2)\rRack (x_1\rRack x_2),$$ for $(y_i,x_i)\in Y\times X, i=1,2$, provides  $Y\rRack X$ with the structure of a (right) topological rack. Note that the topology of $Y\rRack X$ is induced from that of $X$.
\end{dflem}

\begin{df}
A right (resp. left) {\it ideal} of a topological rack (resp. quandle)  $X$ is a closed subrack (resp. subquandle) $Y$ of $X$ such that $X\rRack Y\subseteq Y$ (resp. $Y\rRack X\subseteq Y$).
If $Y$ is a right and left ideal of $X$ at the same time, we will say that $Y$ is a two-sided ideal, or simply an ideal of $X$.
\end{df}

\begin{ex}
Let $G$ be a topological group endowed with the usual topological quandle conjugation structure $x\rRack y=y^{-1}xy$. Then, if $N$ is a closed normal subgroup of $G$, we have $n\rRack g=g^{-1}ng\in N$, for all $g\in G, n\in N$; hence, $N$ is a left ideal of the quandle $Conj(G)$. Conversely, it is straightforward for the definition of the quandle structure of $G$ that if $N$ is a left ideal of the topological quandle $G$, then $N$ is closed subgroup of $G$.
\end{ex}

\begin{df}
A left (resp. right) ideal in a topological rack or quandle $X$ is called {\it proper} if it is not empty and is not (homeomorphic) to the whole $X$. 
\end{df}

\begin{pro}
Assume $X$ is a topological rack with units. Then $X$ admits no proper right ideal.
\end{pro}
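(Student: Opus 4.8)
The plan is to show that every \emph{non-empty} right ideal $Y$ of $X$ is in fact all of $X$, so that no right ideal can be proper. Fix a unit $u\in\cal U_X$ (which exists since $X$ is unital) and, using non-emptiness, choose any $y_0\in Y$. I will then exploit the two defining properties of a unit in turn, each time combined with the ideal condition $X\rRack Y\subseteq Y$.

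First I would observe that $u$ is totally fixed, so $u\rRack y_0=u$; since $u\in X$ and $y_0\in Y$, the element $u\rRack y_0$ lies in $X\rRack Y\subseteq Y$, and therefore $u\in Y$. Second, I would use that $u$ is a stabiliser: for an arbitrary $x\in X$ we have $x=x\rRack u$, and now $x\rRack u\in X\rRack Y\subseteq Y$ precisely because $u\in Y$ by the first step. Hence $x\in Y$ for every $x\in X$, i.e. $X\subseteq Y$, forcing $Y=X$. Since $Y$ then coincides with $X$ as a subspace, it is certainly homeomorphic to it and so is not proper, and the claim follows.

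There is essentially no obstacle here; the only points requiring care are to use \emph{both} unit axioms in the right order---first total-fixedness to land the unit $u$ inside $Y$, then the stabiliser property to sweep all of $X$ into $Y$---and to recall that ``proper'' already excludes the empty ideal, which is what licenses the choice of $y_0$. I would also remark that the unit hypothesis is in fact stronger than necessary: since rack axiom~(i) makes $R_{y_0}\colon X\to X,\ x\mapsto x\rRack y_0$ a bijection, one already has $X=R_{y_0}(X)=X\rRack y_0\subseteq X\rRack Y\subseteq Y$, so any non-empty right ideal equals $X$ in an arbitrary topological rack. The unit-based argument is the one that makes transparent why this conclusion is natural in the unital setting.
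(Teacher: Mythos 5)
Your proof is correct and follows essentially the same route as the paper's: use total-fixedness of a unit $u$ together with a point $y_0$ of the ideal to get $u=u\rRack y_0\in Y$, then use the stabiliser property to get $x=x\rRack u\in Y$ for every $x\in X$. Your closing observation is also correct and worth noting: since $R_{y_0}$ is a bijection, $X=X\rRack y_0\subseteq X\rRack Y\subseteq Y$, so any non-empty right ideal of \emph{any} topological rack is already all of $X$, making the unit hypothesis superfluous.
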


\begin{proof}
First, note that if $I$ is a non-empty right ideal in $X$, then $Fix(X)\subset I$; for if $u$ is totally fixed, then for all $y\in I$, we have $u=u\rRack y\in I$. Now, if $u\in Fix(X)\cap Stab(X)$, then $u\in I$, and we have $x=x\rRack u\in I$ for all $x\in X$. In other words, $X=I$. 
\end{proof}

\begin{df}
Let $f:X\rTo Y$ be a morphism of topological racks. We define the {\it kernel} of $f$ as
\[
\ker f := \{x\in X\mid f(x)\in \cal U_Y\}.
\]
\end{df}

We immediately have the following observation.

\begin{pro}
Let $f:X\rTo Y$ be a morphism of topological racks or quandles. Then $\ker f$ is a left closed ideal in $X$.
\end{pro}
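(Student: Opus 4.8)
The plan is to rewrite $\ker f$ as the preimage $f^{-1}(\mathcal{U}_Y)$ and then check the three conditions in the definition of a left closed ideal one at a time: that $\ker f$ is a subrack, that $\ker f\rRack X\subseteq\ker f$, and that $\ker f$ is closed. The first two are purely algebraic and follow from the morphism identity $f(x\rRack y)=f(x)\rRack f(y)$ combined with the two halves of the unit condition from Definition~\ref{stu}; the third is the one genuinely topological point.

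For the subrack property, take $x_1,x_2\in\ker f$. Since $f(x_2)\in\mathcal{U}_Y$ is in particular a stabiliser of $Y$, we get $f(x_1\rRack x_2)=f(x_1)\rRack f(x_2)=f(x_1)\in\mathcal{U}_Y$, whence $x_1\rRack x_2\in\ker f$. For the left absorption, take $y\in\ker f$ and an arbitrary $x\in X$. Now $f(y)\in\mathcal{U}_Y$ is totally fixed, so $f(y\rRack x)=f(y)\rRack f(x)=f(y)\in\mathcal{U}_Y$, giving $y\rRack x\in\ker f$. The same computation shows $\ker f$ is a subquandle in the quandle case, and the empty case $\mathcal{U}_Y=\emptyset$ is vacuously fine.

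It remains to prove $\ker f$ is closed. Since $f$ is continuous and $\ker f=f^{-1}(\mathcal{U}_Y)$, it suffices to show that $\mathcal{U}_Y=Stab(Y)\cap Fix(Y)$ is closed in $Y$. I would write each factor as an intersection of equalizer-type sets indexed by $Y$: writing $L_y\colon u\mapsto y\rRack u$ for left multiplication, one has $Stab(Y)=\bigcap_{y\in Y}L_y^{-1}(\{y\})$, while $Fix(Y)=\bigcap_{y\in Y}\{u\mid R_y(u)=u\}$ is the intersection of the fixed-point sets of the homeomorphisms $R_y$.

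I expect the main obstacle to be precisely the closedness of these sets, which is not automatic for a general topological space: the sets $L_y^{-1}(\{y\})$ are closed as soon as points are closed in $Y$, and the fixed-point sets $\{u\mid R_y(u)=u\}$ are closed exactly when the diagonal of $Y\times Y$ is closed, i.e. when $Y$ is Hausdorff. I would therefore make explicit the standing Hausdorff hypothesis, satisfied by every example in the paper. Granting it, each $L_y^{-1}(\{y\})$ is the preimage of a closed point under a continuous map, each fixed-point set is the preimage of the diagonal under the continuous map $(R_y,\Id)\colon Y\to Y\times Y$, and hence $\mathcal{U}_Y$ is closed as an intersection of closed sets. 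Continuity of $f$ then yields that $\ker f=f^{-1}(\mathcal{U}_Y)$ is closed, completing the verification that $\ker f$ is a left closed ideal.
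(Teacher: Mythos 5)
Your algebraic argument is correct and is essentially the paper's: the paper's own proof takes $x\in\ker f$ and arbitrary $x'\in X$ and checks directly that $f(x\rRack x')=f(x)\rRack f(x')$ is both a stabiliser and totally fixed, which simultaneously gives the subrack property and $\ker f\rRack X\subseteq\ker f$; your two-step version (using that $f(x_2)$ is a stabiliser for the subrack part and that $f(y)$ is totally fixed for the absorption part) is the same computation organised slightly differently, and your single observation $f(y\rRack x)=f(y)\rRack f(x)=f(y)\in\cal U_Y$ is in fact a little cleaner than the paper's longhand verification. Where you genuinely diverge is on closedness: the paper's proof stops after the ideal property and never addresses why $\ker f$ is closed, whereas you supply the missing argument by writing $\cal U_Y=Stab(Y)\cap Fix(Y)$ as an intersection of preimages of points and of the diagonal. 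Your diagnosis is accurate --- these sets are closed only under a $T_1$ (for $Stab$) respectively Hausdorff (for $Fix$) hypothesis on $Y$, which the paper nowhere imposes --- so your proof is the more complete one, at the price of an explicit separation axiom that the paper implicitly needs but does not state. The only cosmetic redundancy is that your separate subrack verification is already subsumed by the absorption property $\ker f\rRack X\subseteq\ker f$.
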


\begin{proof}
Let $x\in \ker f$ and $x'\in X$. Then, since $f(x)$ is totally fixed in $Y$, we have for all $y\in Y$ 
\[
y\rRack f(x\rRack x')=y\rRack(f(x)\rRack f(x'))=y\rRack f(x)=y,
\]
which implies that $f(x\rRack x')\in Stab(Y)$; and
\[
f(x\rRack x')\rRack y = (f(x)\rRack f(x'))\rRack y=f(x)\rRack y=f(x)=f(x\rRack x'),
\]
which implies that $f(x\rRack x')\in Fix(Y)$. Hence, $x\rRack x'\in \ker f$, and $\ker f\rRack X \subseteq \ker f$.
\end{proof}

We justify the terminology "kernel" of rack morphisms by the following lemma.

\begin{pro}
Let $X$ and $Y$ be topological racks with $Y$ unital, and let $f:X\rTo Y$ be a unital morphism. If $f$ is injective, then  $\ker f =\cal U_X$.
\end{pro}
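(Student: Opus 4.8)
The plan is to prove the two inclusions $\cal U_X \subseteq \ker f$ and $\ker f \subseteq \cal U_X$ separately, with the injectivity hypothesis entering only in the second.

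First I would dispose of the inclusion $\cal U_X \subseteq \ker f$, which needs nothing beyond the unitality of $f$. Indeed, if $u \in \cal U_X$ then $f(u) \in \cal U_Y$ by the very definition of a unital morphism, so $u$ lies in $\ker f = \{x \in X \mid f(x) \in \cal U_Y\}$ at once.

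The reverse inclusion is where the argument does its work. I would take an arbitrary $x \in \ker f$, so that $f(x) \in \cal U_Y$; in particular $f(x)$ is simultaneously a stabiliser and totally fixed in $Y$, and I must show the same for $x$ in $X$. To see that $x$ is totally fixed, fix $y \in X$ and compute
\[
f(x \rRack y) = f(x) \rRack f(y) = f(x),
\]
the first equality by the morphism property and the second because $f(x)$ is totally fixed in $Y$. Injectivity of $f$ then forces $x \rRack y = x$. Symmetrically, using that $f(x)$ is a stabiliser in $Y$ I would obtain $f(y \rRack x) = f(y) \rRack f(x) = f(y)$, whence injectivity yields $y \rRack x = y$. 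As $y$ was arbitrary, $x$ is both totally fixed and a stabiliser, i.e. $x \in \cal U_X$.

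The only genuine hypothesis consumed is injectivity, and this is exactly the step I expect to be the crux: it is what allows the unit conditions satisfied by $f(x)$ in $Y$ to be transported back to $x$ in $X$. Without injectivity, the identities above would only determine the images $f(x \rRack y)$ and $f(y \rRack x)$, not the elements $x \rRack y$ and $y \rRack x$ themselves, and the inclusion $\ker f \subseteq \cal U_X$ could fail. I note finally that no topological input is required, since the defining conditions for a unit are purely algebraic and $f$ being a morphism already guarantees continuity is never invoked.
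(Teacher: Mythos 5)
Your proof is correct and follows essentially the same route as the paper: for $x\in\ker f$ you apply the morphism identity to $f(x\rRack y)$ and $f(y\rRack x)$, use that $f(x)$ is totally fixed and a stabiliser in $Y$, and invoke injectivity to pull these conditions back to $x$. You are in fact slightly more complete than the paper, which only establishes $\ker f\subseteq \cal U_X$ and leaves the easy reverse inclusion (immediate from unitality of $f$) unstated.
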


\begin{proof}
Suppose $f$ injective and let $x_0\in \ker f$. Then, for all $x\in X$, we have 
\[
f(x_0\rRack x)= f(x_0)\rRack f(x)=f(x_0), 
\]
which implies $x_0\rRack x=x_0$; {\it i.e.}, $x_0\in Fix(X)$. Further, 
\[
f(x\rRack x_0)=f(x)\rRack f(x_0)=f(x),
\]
so that $x\rRack x_0=x$; {\it i.e.}, $x_0\in Stab(X)$. We then have shown that $\ker f\subseteq Fix(X)\cap Stab(X)=\cal U_X$. 
\end{proof}

\begin{rmk}
Note that the converse of the above lemma is not true in general. Indeed, let $G$ and $\Gamma$ be topological groups with trivial centres. Any group homomoprhism $f:G\rTo \Gamma$ induces a quandle homomorphism $Q_f:G\rTo \Gamma$ where $G$ and $\Gamma$ are given the usual quandle structure $x\rRack y:=y^{-1}xy$. Moreover, it is easy to check that $f$ is an injective group homomorphism if and only if $Q_f$ is an injective quandle homomorphism. Now, thanks to Example~\ref{ex:units_vs_centre} we see that $\cal U_G$ and $\cal U_{\Gamma}$ are trivial and we obviously have $\ker Q_f=\cal U_G=\{e\}$ for all group homomorphism $f$.
\end{rmk}

\section{Topological rack modules}

In this section we define and study modules over topological racks. 

\noindent Let $X$ be a topological space. By a {\em group bundle} over $X$ we mean a topological space $\cal A$ together with a surjective open continuous map $\pi:\cal A\rTo X$ such that each fibre $\cal A_x, x\in X$, ({\em i.e.} the pre-image $\pi^{-1}(x)\subset \cal A$) is a topological group.

\begin{df}
Let $X$ be topological rack. A {\em rack group bundle} over $X$ consists of a pair $(\cal A, \eta)$ where $\cal A$ is a group bundle over $X$ and $\eta$ is a family of isomorphisms $\eta_{x,y}:\cal A_x\rTo \cal A_{x\rRack y}$ such that 
\[
\eta_{x \rRack y,z}\; \eta_{x,y} =  \eta_{x \rRack z,y \rRack z}\; \eta_{x,z}
\]
for all $x,y,z\in X$.
\end{df}

\begin{df}\label{module}
Let $X$ be a topological rack. An $X$-module is a triple $\cat A=(\cal A,\eta,\tau)$ where $(\cal A,\eta)$ is a rack group bundle over $X$ and $\tau$ is a family of topological group morphisms $\tau_{x,y}:\cal A_y\rTo \cal A_{x\rRack y}$ such that

\begin{enumerate}

\item $\cal A_x$ is abelian for all $x\in X$;
\item $ \eta_{x \rRack y,z}\; \tau_{x,y} =  \tau_{x \rRack z,y \rRack z}\; \eta_{y,z}$; and
\item $\tau_{x \rRack y,z}= \eta_{x \rRack z,y \rRack z} \tau_{x,z}+
 \tau_{x \rRack z,y \rRack z}\tau_{y,z}$.
\end{enumerate}
Moreover, if $X$ is a quandle, we require the following axiom
\begin{enumerate}
  \item[(4)] $\tau_{x,x} + \eta_{x,x} =id_{\cal A_x}$.
\end{enumerate}
\end{df}

Observe that our definition coincides with the definition of~\cites{And-Grana, Carter-Elhamdadi-Grana-Saito}  when $X$ is given the discrete topology and when $A_x$ is a fixed abelian group $A$ for all $x \in X$. 
The first two identities of the definition \ref{module} can be understood as the following two commutative diagrams,
\[
\xymatrix{
A_x\ar[r]^{\eta_{x,y}} \ar[d]_{\eta_{x,z}} & A_{x \rRack y } \ar[d]^{\eta_{x \rRack y,z}}
\\ A_{x \rRack z } \ar[r]_{\eta_{x \rRack z,y \rRack z}} & A_{(x \rRack y)\rRack z }
}
\]
 
and

\[
\xymatrix{
A_y\ar[r]^{\tau_{x,y}} \ar[d]_{\eta_{y,z}} & A_{x \rRack y } \ar[d]^{\eta_{x \rRack y,z}}
\\ A_{y \rRack z }  \ar[r]_{\tau_{x \rRack z,y \rRack z}} & A_{(x \rRack y)\rRack z }
}
\]

All the following examples correspond to the case when $A_x$ is a fixed abelian group $A$ for all $x \in X$.
\begin{exs}
\begin{enumerate}
\item
Let $X$ be a topological rack and $A$ be a topological abelian group. Take $\eta_{x,y}$ to be the identity map and $\tau_{x,y}$ to be the zero map. Then $A$ is trivially a topological $X$--module.
\item
Let $\Lambda = \bb Z[t,t^{-1}]$ denote the ring of Laurent polynomials. 
Then any  $\Lambda$-module $A$ is an $X$-module for any quandle $X$,
by $\eta_{x,y} (a)=ta$ and $\tau_{x,y} (b) = (1-t) (b) $
for any $x,y \in X$.
\item
Given a topological rack $X$ (we may need to assume that $X$ is completely regular space),  recall that the free topological group $F(X)$ on $X$ is defined to be the unique (up to topological isomorphism) topological group such that (1) the injection $i: X \rTo F(x)$ is continous, and (2) for any topological group $G$ and a continuous map $\phi: X \rightarrow $, there is a unique continuous homomorphism $\Phi: F(x) \rightarrow G$, such that $\phi=\Phi \circ i$.
 
 let $G_X$ be the topological quotient  group  $F(x)/N$, where $N$ is the normal subgroup generated by $\langle \ x\rRack y -yxy^{-1} \rangle$. Any $G_X$-module $A$  is a  $X$-module by 
 $\eta_{x,y} (a)=y a$ and $\tau_{x,y} (b) = b- (x \rRack y)b$,
where $x, y \in X$, $a, b \in A$.
\end{enumerate}
\end{exs}

\begin{pro}[Rack semidirect product]
Let $X$ be a topological rack and $\cat A=(\cal A,\eta,\tau)$ be an $X$-- module. Let the set 
\[
\cat A\ltimes X:= \left\{(a,x)\in \cal A\times X \mid a\in \cal A_x\right\}
\]
be equipped with the topology induced from that of the product topology of $\cal A\times X$. Then, under the operation 
\begin{eqnarray}\label{eq:semidirect}
(a,x)\rRack (b,y):=(\eta_{x,y}(a)+\tau_{x,y}(b),x\rRack y),
\end{eqnarray}
$\cat A\ltimes X$ is a topological rack called the {\em rack  semidirect product} of $\cat A$ and $X$.
\end{pro}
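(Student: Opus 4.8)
The plan is to check, for the operation~\eqref{eq:semidirect}, the three things required of a topological rack: that it is well defined and continuous, that it is right distributive, and that every right multiplication is a bijection.

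First I would dispose of well-definedness and continuity. For $(a,x),(b,y)\in\cat A\ltimes X$ one has $a\in\cal A_x$ and $b\in\cal A_y$, so $\eta_{x,y}(a)\in\cal A_{x\rRack y}$ (since $\eta_{x,y}\colon\cal A_x\to\cal A_{x\rRack y}$ is an isomorphism) and $\tau_{x,y}(b)\in\cal A_{x\rRack y}$ (since $\tau_{x,y}\colon\cal A_y\to\cal A_{x\rRack y}$ is a homomorphism); their sum, formed in the abelian group $\cal A_{x\rRack y}$, again lies in $\cal A_{x\rRack y}$, so $(a,x)\rRack(b,y)\in\cat A\ltimes X$. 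Continuity of $\rRack$ on the subspace $\cat A\ltimes X\subset\cal A\times X$ then follows because the operation is assembled by composition from the continuous structure maps $\eta$ and $\tau$, the fibrewise addition of $\cal A$, and the rack operation of $X$.

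The heart of the argument, and the step I expect to be the main obstacle, is right distributivity; it is here that all of the module axioms must be used at once. Expanding $((a,x)\rRack(b,y))\rRack(c,z)$ and $((a,x)\rRack(c,z))\rRack((b,y)\rRack(c,z))$ via~\eqref{eq:semidirect}, the two base coordinates agree because $X$ is a rack, both equaling $(x\rRack y)\rRack z=(x\rRack z)\rRack(y\rRack z)$. For the fibre coordinates, using that the relevant $\eta$'s are homomorphisms, the left side becomes
\[
\eta_{x\rRack y,z}\eta_{x,y}(a)+\eta_{x\rRack y,z}\tau_{x,y}(b)+\tau_{x\rRack y,z}(c),
\]
and the right side becomes
\[
\eta_{x\rRack z,y\rRack z}\eta_{x,z}(a)+\tau_{x\rRack z,y\rRack z}\eta_{y,z}(b)+\eta_{x\rRack z,y\rRack z}\tau_{x,z}(c)+\tau_{x\rRack z,y\rRack z}\tau_{y,z}(c).
\]
These agree term by term: the coefficient of $a$ by the cocycle identity of the rack group bundle $(\cal A,\eta)$, that of $b$ by axiom~(2) of Definition~\ref{module}, and that of $c$ by axiom~(3); the commutativity of the fibres (axiom~(1)) is precisely what lets me regroup the summands so that these three matchings can be read off separately. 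Note that axiom~(4) is not needed, in agreement with the statement asserting only a rack structure.

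Finally I would verify rack property~(i), i.e. that each right multiplication $R_{(b,y)}$ is a bijection of $\cat A\ltimes X$. Given $(b,y)$ and a target $(c,w)$, solving $(\eta_{x,y}(a)+\tau_{x,y}(b),\,x\rRack y)=(c,w)$ proceeds in two stages: since $R_y$ is a homeomorphism of $X$, the equation $x\rRack y=w$ forces the unique value $x=R_y^{-1}(w)$; with this $x$, the map $\eta_{x,y}$ is an isomorphism onto $\cal A_w$, so $a=\eta_{x,y}^{-1}\!\big(c-\tau_{x,y}(b)\big)$ is the unique solution of the fibre equation. Hence $R_{(b,y)}$ is bijective, and its inverse $(c,w)\mapsto\big(\eta_{x,y}^{-1}(c-\tau_{x,y}(b)),R_y^{-1}(w)\big)$ is continuous for the same reasons as above together with continuity of $R_y^{-1}$, so $R_{(b,y)}$ is a homeomorphism. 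This exhausts the rack axioms and shows $\cat A\ltimes X$ is a topological rack.
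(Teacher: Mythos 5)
Your proof is correct, and it is in fact more self-contained than the one in the paper. The paper's own proof explicitly \emph{omits} all the algebraic verifications, referring the reader to Proposition~2.1 of Jackson's \emph{Extensions of racks and quandles} for the right-distributivity and invertibility checks, and spends its entire effort on continuity, which it establishes by an explicit open-set argument: given a basic open set $\cal O\times U$ meeting $\cat A\ltimes X$ and a point of its preimage under the operation, it produces neighbourhoods in each factor using continuity of the rack operation on $X$, of the fibrewise addition, and of the maps $\eta_{x,y}$ and $\tau_{x,y}$. You make the opposite trade: the algebra is written out in full --- and your term-by-term matching of the coefficients of $a$, $b$, $c$ against the bundle cocycle identity and axioms (2) and (3) of the module definition is exactly the computation Jackson's proposition encapsulates, with the correct observation that axiom (4) is not needed for the rack (as opposed to quandle) structure --- while continuity is dispatched in one line as a composition of continuous maps. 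That one line is at essentially the same level of rigour as the paper's argument, since both ultimately rest on the implicit assumption that the families $\eta_{x,y}$ and $\tau_{x,y}$ depend continuously on $(x,y)$ and the fibre variable jointly (the definition of an $X$-module only calls them families of continuous morphisms, so neither proof is fully airtight on this point). Your additional verification that each $R_{(b,y)}$ is a homeomorphism, by solving the base equation with $R_y^{-1}$ and the fibre equation with $\eta_{x,y}^{-1}$, is a welcome supplement that the paper leaves entirely to the citation.
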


\begin{proof}
We omit the algebraic verifications since they are similar as in the proof of~\cite[Proposition 2.1]{Jackson:Extensions_racks}. It remains to check that the operation~\eqref{eq:semidirect} is continuous when $\cat A\ltimes X$ is endowed with the induced topology from $\cal A\times X$. Let then $\cal O\times U$ be an open subset of $\cat A\ltimes X$ and $((a,x),(b,y))$ be in the pre-image $F$ of $\cal O\times U$ through the binary operation~\eqref{eq:semidirect} so that we have
\[
(\eta_{x,y}(a)+\tau_{x,y}(b),x\rRack y)\in \cal O\times U\subset \cal A\times X.
\] 
In particular $x\rRack y\in U$ and since the rack operation of $X$ is continuous, there exist open sets $V,W\subset X$ such that $x\in V$ and $y\in W$. Further, since the group operation in $\cal A_{x\rRack y}$ is continuous and $\eta_{x,y}(a)+\tau_{x, y}(b)\in \cal O\subset \cal A_{x\rRack y}$, there exist two open subsets $C'$ and $D'$ in $\cal A_{x\rRack y}$ containing $\eta_{x,y}(a)$ and $\tau_{x,y}(b)$, respectively. Now, by continuity of the morphisms $\eta_{x,y}$ and $\tau_{x,y}$, we can find open subsets $C$ and $D$ of $\cal A_x$ and $\cal A_y$ containing $a$ and $b$, respectively. It follows that $F\subset (C\times U)\times (D\times V)$ is open $(\cat A\ltimes X)\times (\cat A\ltimes X)$.
\end{proof}

\begin{ex}\label{TLR2}
Let $V$ be a continuous representation of a topological group $G$ as in example~\ref{TLR}. Then the first projection $G\times V \ni (g,u)\mto g\in Conj(G)$ defines a rack group bundle over the conjugation rack $Conj(G)$ by setting 
\[
\eta_{g,h}(v):=h^{-1}\cdot v, \ g,h \in G, v\in V.
\]
Furthermore, it is straightforward to check that $(G\times V,\eta, 0)$ is a topological rack $Conj(G)$--module where $0$ is the zero map on the vector space $V$.
\end{ex}

\noindent
The following is a generalization of example \ref{TLR2}.

\begin{ex}
Let $V$ be a continuous representation of a topological group $G$ and $\a:G \times G \rightarrow V$ be a mapping. Consider the binary operation on $V\times G$ given as follows:
\[
(u,g)\rRack (v,h):= (h^{-1}\cdot u + \a(g,h), h^{-1}gh), \ g,h\in G, u,v\in V.
\]
Then it is easly seen that this binary operation gives a rack structure on $V\times G$ if and only if the map $\a$ is a {\em cocycle}; that is, $\alpha$ satisfies the following condition, for all $g,h,k \in G$
$$k^{-1}\a(g,h)+\a(h^{-1}gh,k)=k^{-1}h^{-1}k\a(g,k)+\a(k^{-1}gk,k^{-1}hk).
$$ 
In this case the the topological rack thus obtained if denote by $V\rtimes_\a G$. Next, it is straightforward to see that the projection $pr_2:V\rtimes_\a G\ni (u,g)\mto g\in Conj(G)$ is a rack group bundle with fibre the abelian group $V$ with $\eta_{g,h}(v):= h^{-1}\cdot v$, for $g,h\in G, u\in V$. Moreover, by setting $\tau_{g,h}(v)=0\in V$ for all $g,h\in G, v\in V$, we turn $V\rtimes_rG$ into a $Conj(G)$--module. 
\end{ex}

\section{Extensions of topological racks}

 In this section we define the notion of central extensions of topological racks by rack modules. We recall from~\cite[Definition 2.2]{Rubinsztein:Top_Quandles} that given a topological rack $X$ and a topological space $M$, a {\em continuous rack action} of $X$ on $M$ consists of a continuous map $$M\times X\ni (m,x)\mto m\cdot x \in M$$ such that 
 \[
 (m\cdot x)\cdot y=(m\cdot y)\cdot (x\rRack y), \ \forall m\in M, x,y\in X.
 \]

\begin{ex}
Let $X$ be a topological rack and denote by $\underline{X}$ its underlying topological space. Then the binary operation $\rRack: \underline{X}\times X\rTo \underline{X}$ defines a continuous rack $X$--action on $\underline{X}$.
\end{ex}

\begin{ex}
Suppose $X$ is a topological rack and $\cat A$ is an $X$-module. Then the topological space $\cat A\ltimes X$ is naturally equipped with continuous rack action of $X$ as follows:
\[
(a,x)\cdot y := (\eta_{x,y}(a),x\rRack y), (a,x)\in \cat A\ltimes X, y\in X.
\]
\end{ex}

\begin{rmk}
We shall observe that any continuous right 
action $M\times A\rTo M$ of a topological group $A$ on a topological space $M$ is actually a rack action of the topological quandle $Conj(A)$ on $M$ (cf.~\cite[Example 2.9]{Rubinsztein:Top_Quandles}). Whence, in the sequel we will not distinguish between continuous action of a topological $A$ in the usual sense and the induced rack action of its conjugation rack.
\end{rmk}

\begin{df} 
Let $A$ be a topological group. Suppose $E$ is a topological rack with a continuous $A$-action. Let $p:E\rTo X$ be a surjective rack homomorphism with local continuous sections. We say that $(E,p)$ is an $A$-principal rack bundle if the fibres $E_x:=p^{-1}(x)$ are transitive with respect to the $A$-action; {\em i.e.}, for all $e,e'\in E_x$ there is a unique $a(e,e')\in A$ such that $e'=e\cdot a(e,e')$.
\end{df}

We immediately have the following observation. 

\begin{lem}
If $p:E\rTo X$ is an $A$-- principal rack bundle, then for all local section $s:U\rTo E$ of $p$ ({\em i.e.}, $p\circ s=\Id_U$) we get a homeomorphism $$E_{|U}\stackrel{\cong}{\rTo} U\times A$$ as follows: for $e\in E_{|U}$, let $z=p(e)\in U$, then since $E$ is $A$-principal and $s(z),e\in E_z$, there exists a unique $a(s(z),e)\in A$ such that $e=s(z)\cdot a(s(z),e)$. We then define $E_{|U}\ni e\mto \left(p(e),a(s(p(e)),e)\right)\in U\times A$. And $$U\times A\ni (x,a)\mto x\cdot a\in E_{|U}.$$
\end{lem}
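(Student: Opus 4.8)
The plan is to exhibit the two displayed maps as mutually inverse homeomorphisms. Write $\Phi\colon E_{|U}\rTo U\times A$ for the map $e\mto (p(e),a(s(p(e)),e))$ and $\Psi\colon U\times A\rTo E_{|U}$ for the evident inverse candidate $(x,a)\mto s(x)\cdot a$. First I would record that both are well defined. For $\Phi$: given $e\in E_{|U}$ set $z=p(e)\in U$; since $p\circ s=\Id_U$ we have $p(s(z))=z=p(e)$, so $s(z)$ and $e$ lie in the same fibre $E_z$, and $A$-principality furnishes the unique $a(s(z),e)\in A$ with $e=s(z)\cdot a(s(z),e)$. For $\Psi$: since the $A$-action preserves fibres, that is $p(e\cdot a)=p(e)$ (which is implicit in the fibres being $A$-transitive), we get $s(x)\cdot a\in E_x\subseteq E_{|U}$. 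That $\Phi$ and $\Psi$ are mutually inverse is then a direct consequence of the uniqueness clause in the definition of an $A$-principal bundle: on one hand $\Psi(\Phi(e))=s(p(e))\cdot a(s(p(e)),e)=e$, and on the other, to compute $\Phi(\Psi(x,a))$ one uses $p(s(x)\cdot a)=x$ together with the uniqueness of the translating element to obtain $a(s(x),s(x)\cdot a)=a$.

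Next I would treat continuity, which splits into the two directions. Continuity of $\Psi$ is immediate: it is the composite $U\times A\xrightarrow{s\times\Id_A} E\times A\rTo E$ of $s\times\Id_A$ (continuous because $s$ is) with the continuous $A$-action, its image lying in $E_{|U}$ by the fibre-preservation noted above.

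The delicate direction, and the step I expect to be the main obstacle, is continuity of $\Phi$. Its first coordinate is just $p$, continuous by hypothesis. The second coordinate is the \emph{translation function} $\delta\colon E_{|U}\rTo A$, $e\mto a(s(p(e)),e)$, the unique element with $e=s(p(e))\cdot\delta(e)$. I would factor $\delta$ as $e\mto (s(p(e)),e)$ into the fibred product $\{(e',e'')\in E\times E\mid p(e')=p(e'')\}$ — which is continuous since $s$ and $p$ are — followed by the map sending a pair in one fibre to the unique group element carrying the first point to the second. Thus continuity of $\Phi$ reduces exactly to continuity of this translation function, and this is the one point that does \emph{not} follow formally from the axioms as stated. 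As in the classical topological theory of principal bundles, the clean remedy is to take continuity of the translation function to be part of the meaning of an $A$-principal rack bundle, equivalently that $(e,a)\mto (e,e\cdot a)$ be a homeomorphism of $E\times A$ onto $E\times_X E$; granting this, $\delta$ and hence $\Phi=\Psi^{-1}$ is continuous and the lemma follows. If instead one wishes to avoid that hypothesis, the alternative is to prove directly that the continuous bijection $\Psi$ is open, and I expect that establishing openness of $\Psi$ from continuity of the action alone is precisely where the genuine difficulty lies, likely requiring local compactness or properness conditions on $A$ that are absent from the present setup.
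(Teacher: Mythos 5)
The paper offers no argument at all here --- the lemma is introduced with ``We immediately have the following observation'' and the statement itself merely exhibits the two maps --- so there is nothing in the source to compare your proof against step by step. Your verification that the two maps are mutually inverse, and your proof that $\Psi\colon (x,a)\mapsto s(x)\cdot a$ is continuous as the composite of $s\times\Id_A$ with the action map, are correct and are presumably what the authors had in mind.

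Your diagnosis of the remaining step is also correct, and it identifies a real gap in the lemma as the paper states it rather than a gap in your own argument. The definition of an $A$--principal rack bundle given in the paper requires only (i) a continuous $A$--action, (ii) a surjective rack homomorphism $p$ with continuous local sections, and (iii) simple transitivity of $A$ on each fibre. From these axioms alone one gets that $\Psi$ is a continuous bijection $U\times A\to E_{|U}$, but not that its inverse $\Phi$ is continuous: the second coordinate of $\Phi$ is exactly the translation function $\delta(e)=a(s(p(e)),e)$, and its continuity is an additional hypothesis in the classical theory of principal bundles (equivalently, that $(e,a)\mapsto(e,e\cdot a)$ is a homeomorphism onto $E\times_X E$), not a formal consequence of continuity of the action. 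A continuous bijection between topological spaces need not be a homeomorphism, and without local compactness or properness assumptions on $A$ there is no general way to force $\Psi$ to be open. So the honest conclusion is the one you reach: either the definition of $A$--principal rack bundle should be read as including continuity of the translation function, in which case your proof is complete, or the lemma as stated requires an extra hypothesis that the paper does not make explicit. You have not overlooked anything; you have located the one point the paper glosses over.
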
 
 
\begin{df}
Let $X$ be a topological rack and $\cat A=(\cal A,\eta,\tau)$ be an $X$--module. A {\em central $\cat A$--extension} of $X$ consists of
\begin{itemize}
\item a topological rack $E$;
\item a surjective rack homomorphism $p:E\rTo X$ with continuous local sections;
\item a continuous $\cal A$--principal action of $E$; that is a continuous map 
\[
E\times_{X}\cal A \ni (e,a)\mto e\cdot a\in E, 
\]
where $E\times_X\cal A=\{(e,a)\in E\times \cal A\mid p(e)=\pi(a)\in X\}$, such that for all $x\in X$ and $e,e'\in E_x$, there is a unique element $a(e,e')\in \cal A_x$ with $e'=e\cdot a(e,e')$,
\end{itemize}
satisfying the following axioms
\begin{itemize}
\item[(1)] for all $(e,a)\in E\times_X\cal A$ and all $f\in E$ with $p(f)=y\in X$, we have 
\[
(e\cdot a)\rRack f=(e\rRack f)\cdot \eta_{x,y}(a); 
\] 
\item[(2)] for all $e\in E$ with $p(e)=x\in X$ and all $(f,b)\in E\times_X\cal A$, we have 
\[
e\rRack (f\cdot b)=(e\rRack f)\cdot \tau_{x,y}(b).
\]
\end{itemize}
Such an central $\cat A$--extension is represented as $(E,p)$.
\end{df} 

\begin{pro}[Trivial extension]
Let $\cat A$ be an $X$--module. Then the $(\cat A\ltimes X,\tilde{\pi})$ is a central $\cal A$-extension, where the projection $\tilde{\pi}:\cat A\ltimes X\rTo X$ is given by $\tilde{\pi}(a,x)=x=\pi(a)$ and the $\cal A_x$--action on $\cal A_x$ is by multiplication on the topological abelian group $\cal A_x$.
\end{pro}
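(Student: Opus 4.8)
The plan is to verify directly that $E:=\cat A\ltimes X$, equipped with $p:=\tilde\pi$ and the fibrewise translation action, satisfies each clause of the definition of a central $\cat A$--extension. The object $\cat A\ltimes X$ is already a topological rack by the Rack semidirect product proposition, so that clause is free. For the map $\tilde\pi(a,x)=x$: it is the restriction to $\cat A\ltimes X\subset\cal A\times X$ of the second projection, hence continuous; it is a rack homomorphism since $\tilde\pi((a,x)\rRack(b,y))=x\rRack y=\tilde\pi(a,x)\rRack\tilde\pi(b,y)$; and it is surjective because for every $x\in X$ the unit $0_x\in\cal A_x$ yields the point $(0_x,x)$. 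For the continuous local sections I would use the identity section: $x\mapsto(0_x,x)$ defines a global continuous section of $\tilde\pi$, its continuity being that of the unit section of the group bundle $\cal A\rTo X$. More generally, local continuous sections of $\tilde\pi$ over an open $U$ are precisely local continuous sections of $\pi:\cal A\rTo X$ over $U$.

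Next I would introduce the action. For $e=(a,x)\in E$ and $c\in\cal A_x=\pi^{-1}(\tilde\pi(e))$ set $e\cdot c:=(a+c,x)$, which lies again in $E_x$ and is continuous because it is assembled from the fibrewise group addition of $\cal A$. Restricted to a fibre $E_x=\cal A_x\times\{x\}$ this is simply translation by the abelian group $\cal A_x$, hence free and transitive: for $e=(a,x)$ and $e'=(a',x)$ the unique element with $e'=e\cdot c$ is $a(e,e')=a'-a\in\cal A_x$. This is exactly the $\cal A$--principal structure required.

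Finally I would check axioms (1) and (2) by direct substitution, the only algebraic input being that every $\eta_{x,y}$ and $\tau_{x,y}$ is a group homomorphism and that the target fibre is abelian. For (1), with $e=(a,x)$, $f=(b,y)$ and $c\in\cal A_x$, expanding $(e\cdot c)\rRack f$ and using $\eta_{x,y}(a+c)=\eta_{x,y}(a)+\eta_{x,y}(c)$ together with commutativity of $\cal A_{x\rRack y}$ gives $(e\rRack f)\cdot\eta_{x,y}(c)$; axiom (2) is identical with $\tau_{x,y}$ acting on the second factor in place of $\eta_{x,y}$.

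These computations are entirely routine, so I do not expect a genuine obstacle on the algebraic side; the only points demanding care are the two topological conditions, namely the existence of continuous local sections and the continuity of the action. Both reduce to continuity properties of the underlying group bundle $\cal A$ (its identity section and its fibrewise addition), and if one takes these as part of the data of a group bundle --- as is already implicit in the proof that $\cat A\ltimes X$ is a topological rack --- nothing further is needed.
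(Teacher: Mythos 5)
Your direct verification is correct and is exactly the routine check the paper has in mind: the authors simply declare the proof straightforward and omit it, and every clause you verify (rack homomorphism, surjectivity, the fibrewise translation action with $a(e,e')=a'-a$, and axioms (1) and (2) via the homomorphism property of $\eta_{x,y}$, $\tau_{x,y}$ and commutativity of $\cal A_{x\rRack y}$) is the intended computation. Your caveat that the continuity of the local sections and of the action rests on continuity of the unit section and of fibrewise addition --- properties not spelled out in the paper's minimal definition of a group bundle but implicitly assumed, e.g.\ in the proof of the rack semidirect product proposition --- is well taken and correctly flagged.
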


The proof is straightforward, so we omit it.

\begin{df}
Let $(E,p)$ and $(F,q)$ be two central $\cat A$-extensions of $X$. 
\begin{enumerate}
\item A morphism $\vp:(E,p)\rTo (F,q)$ is a topological rack homomorphism $\vp:E\rTo F$ which is a bundle morphism and $\cal A$--equivariant in the sense that the following diagrams commute
\[
\xymatrix{
E\ar[rr]^\vp \ar[rd]_p && F \ar[ld]^q \\ 
& X &
}
\]
and
\[
\xymatrix{
E\times_X\cal A \ar[r] \ar[d]_{\vp\times Id} & E \ar[d]^\vp \\
F\times_X\cal A \ar[r] & F
}
\]
where in the horizontal arrows in the second diagram are the $\cal A$--actions; {\em i.e.}, $$q(\vp(e))=p(e), \forall e\in E, \ {\rm and\ } \vp(e)\cdot a=\vp(e\cdot a), \forall (e,a)\in E\times_X\cal A.$$
\item We say that $(E,p)$ and $(F,q)$ are {\em equivalent}, and we write $(E,p)\sim (F,q)$, if there exists a morphism $\vp:(E,p)\rTo (F,q)$ which is an isomorphism of topological racks whose inverse $\vp^{-1}:F\rTo E$ is also a morphism of central $\cat A$--extensions. In this case, we say that $\vp$ is an {\em equivalence} of central $\cat A$--extensions. We denote by $Ext(X,\cat A)$ the set of equivalence classes of central $\cat A$--extensions of $X$.
\item The extension $(E,p)$ is said to be {\em trivial} if it is equivalent to the trivial central $\cat A$--extension $(\cat A\ltimes X,\tilde{\pi})$.
\end{enumerate}
\end{df}

\begin{dfpro}[Baer sum]
Let $(E,p)$ and $(F,q)$ be central $\cat A$--extensions of $X$. Consider the equivalence relation "$\sim$" in
\[
E\times_XF:= \{(e,f)\in E\times F\mid p(e)=q(f)\}
\]
given by $(e\cdot a,f)\sim (e,f\cdot a)$ for $(e,a)\in E\times_X\cat A$, and define the topological space $E\sqcup_XF$ to be the quotient space. We denote by $[e,f]$ the class of $(e,f)\times E\times_XF$ in $E\sqcup_XF$.  
Then, with respect to the binary operation 
\[
[e_1,f_1]\rRack [e_2,f_2]:=[e_1\rRack e_2,f_1\rRack f_2],
\]
$E\sqcup_XF$ is a topological rack. Furthermore, $E\sqcup_XF$ is equipped with the continuous $\cat A$--principal action 
\[
[e,f]\cdot a:=[e\cdot a,f]=[e,f\cdot a], \ (e,a)\in E\times_X\cal A,
\]
and the projection $p:E\sqcup_XF \ni [e,f] \mto p(e)=q(f)\in X$ makes $(E\sqcup_XF,p)$ into a central $\cat A$--extension of $X$ which we call the {\em Baer sum} of $(E,p)$ and $(F,q)$.
\end{dfpro}

\begin{pro}
Let $(E,p)$ be a representative of a class in $Ext(X,\cat A)$. Let $(E^\circ,p^\circ)$ be the central $\cat A$--extension of $X$ where $E^\circ$ is $E$ as a topological space, $p^\circ:E^\circ\rTo X$ is the projection of $E$ ({\em i.e.}, for $e^\circ\in E^\circ, \ p^\circ(e^\circ):=p(e)$, where we write $e^\circ$ for $e\in E$ viewed as an element in $E^\circ$), and the continuous $\cal A$--principal action is given by 
\[
e^\circ \cdot a:=(e\cdot a^{-1})^\circ, \ (e^\circ,a)\in E^\circ\times_X \cal A.
\]
Then the central $\cat A$--extension $(E\sqcup_XE^\circ,p)$ is trivial. We call $(E^\circ,p^\circ)$ the {\em opposite} of $(E,p)$.
\end{pro}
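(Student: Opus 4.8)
The plan is to produce an explicit equivalence
\[
\Phi\colon E\sqcup_X E^\circ \rTo \cat A\ltimes X
\]
and to verify that it is an isomorphism of central $\cat A$--extensions. Given a class $[e,f^\circ]$ in the Baer sum, write $x:=p(e)=p^\circ(f^\circ)$; then $e,f\in E_x$, so by $\cal A$--principality of $(E,p)$ there is a unique $a(f,e)\in\cal A_x$ with $e=f\cdot a(f,e)$, and I would set $\Phi([e,f^\circ]):=(a(f,e),x)$. The first task is well-definedness. Unwinding the defining relation of $E\sqcup_XE^\circ$ together with the opposite action $f^\circ\cdot a=(f\cdot a^{-1})^\circ$, one sees that $(e_1,f_1^\circ)$ is identified with $(e_2,f_2^\circ)$ exactly when there is $a\in\cal A_x$ with $e_1=e_2\cdot a$ and $f_1=f_2\cdot a$; the quantity $a(f,e)$ is then unchanged under this identification \emph{precisely because $\cal A_x$ is abelian} (Definition~\ref{module}(1)). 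The same abelianness yields bijectivity: injectivity by comparing the unique correcting elements, and surjectivity by taking any $f\in E_x$ (nonempty since $p$ is onto) and setting $e:=f\cdot c$ to realise $(c,x)$.

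Next I would check the algebraic axioms of an extension morphism. Projection compatibility $\tilde\pi\circ\Phi=p$ is immediate, and $\cal A$--equivariance is the one-line computation $\Phi([e,f^\circ]\cdot a)=\Phi([e\cdot a,f^\circ])=(a(f,e)+a,x)=(a(f,e),x)\cdot a$, using $e\cdot a=f\cdot(a(f,e)+a)$ and that the action on $\cat A\ltimes X$ is fibrewise group multiplication. The crux is that $\Phi$ is a rack homomorphism. Writing $e_i=f_i\cdot c_i$ with $c_i=a(f_i,e_i)$ and $x_i=p(e_i)$, I would expand $e_1\rRack e_2=(f_1\cdot c_1)\rRack(f_2\cdot c_2)$ using first axiom (1) and then axiom (2) of the central-extension structure to obtain
\[
e_1\rRack e_2=(f_1\rRack f_2)\cdot\bigl(\eta_{x_1,x_2}(c_1)+\tau_{x_1,x_2}(c_2)\bigr),
\]
the two corrections being addable because $\cal A_{x_1\rRack x_2}$ is abelian. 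Since the second coordinate of the Baer-sum product is $(f_1\rRack f_2)^\circ$, this exactly says $\Phi([e_1,f_1^\circ]\rRack[e_2,f_2^\circ])=(\eta_{x_1,x_2}(c_1)+\tau_{x_1,x_2}(c_2),\,x_1\rRack x_2)$, which is $(c_1,x_1)\rRack(c_2,x_2)$ in $\cat A\ltimes X$ by~\eqref{eq:semidirect}.

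Finally I would treat the topology, which I expect to be the main obstacle. Choosing a continuous local section $s\colon U\rTo E$ of $p$, every class over $U$ can be normalised via the relation to the form $[s(z),(s(z)\cdot b)^\circ]$, on which $\Phi$ sends $b$ to $(-b,z)$ in the trivialisation $E_{|U}\cong\cal A_{|U}$ supplied by the local-trivialisation lemma above; as the principal division map $e\mto a(s(z),e)$ and the fibrewise group operations are continuous, $\Phi$ is continuous. For the inverse I would use $\Phi^{-1}(c,z)=[s(z)\cdot c,\,s(z)^\circ]$, continuous as a composite of the section, the action, and the quotient map, with a one-line check that it inverts $\Phi$ locally; since the resulting local inverses agree on overlaps they patch to a global continuous inverse. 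As a bijective rack homomorphism, $\Phi^{-1}$ is automatically a rack homomorphism, and equivariance and projection compatibility transfer to it as well, so $\Phi$ is an equivalence. Hence $(E\sqcup_XE^\circ,p)\sim(\cat A\ltimes X,\tilde\pi)$, i.e.\ the extension is trivial.
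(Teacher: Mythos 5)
Your proof is correct and follows essentially the same route as the paper: both construct an explicit equivalence $E\sqcup_XE^\circ\rTo\cat A\ltimes X$ via the principal division map $a(\cdot,\cdot)$, use abelianness of the fibres for well-definedness, and invert using a choice of fibre element / local section. You merely use the opposite sign convention ($e=f\cdot a(f,e)$ rather than $f=e\cdot a(e,f)$) and spell out the rack-homomorphism, equivariance, and continuity checks that the paper leaves as an ``easy check.''
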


\begin{proof}
Define the map $\psi:E\sqcup_XE^\circ \rTo \cat A\ltimes X$ by
\[
\psi([e,f^\circ]):=(a(e,f),p(e)),
\]
where $a(e,f)$ is the unique element in $\cal A_{p(e)}=\cal A_{p(f)}$ such that $f=e\cdot a(e,f)$. To see that $\psi$ is well defined, take $(e,b)\in E\times_X\cal A$, and $(e,f^\circ)\times E\times_XE^\circ$. Then, we have $e=f\cdot a(e,f)^{-1}$, so that
\[
\begin{array}{ll}
f\cdot b &  =  (e\cdot b)\cdot a(e\cdot b,f\cdot b) \\ 
 &=  (f\cdot a(e,f)^{-1}b)\cdot a(e\cdot b,f\cdot b)\\
 &=  (f\cdot b)\cdot (a(e,f)^{-1}a(e\cdot b,f\cdot b))
\end{array}
\]
since $\cal A_{p(e)}$ is an abelian group. Therefore, since $E$ is $\cal A$--principal, the element $a(e,f)^{-1}a(e\cdot b,f\cdot b)$ is unique and must then be equal to the identity in $\cal A_{p(e)}$. In other words, $a(e,f)=a(e\cdot b,f\cdot b)$, and $\psi([e,f])=\psi([e\cdot b,f^\circ\cdot b^{-1}])$. It is a matter of easy check to see that $\psi$ is a morphism of central $\cat A$--extensions of $X$. 
Now, we get an inverse $\phi$ of $\psi$ by setting for all $(a,x)\in \cat A\ltimes X$, 
\[
\phi(a,x):= [e_x,(e_x\cdot a)^\circ],
\]
where $e_x$ is any element in the fibre $E_x$.
\end{proof}

\begin{cor}
Let $\cat A$ be an $X$--module. Then $Ext(X,\cat A)$ is an abelian group under Baer sum and inverse given by the equivalence class of the opposite extension. The zero element is the class of the trivial extension.
\end{cor}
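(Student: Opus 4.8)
The plan is to verify the abelian group axioms for the operation induced by the Baer sum on $Ext(X,\cat A)$, reusing the three preceding results: the trivial extension $(\cat A\ltimes X,\tilde\pi)$ will serve as the neutral element, and the opposite $(E^\circ,p^\circ)$ provides the inverse, since we have already shown that $(E\sqcup_X E^\circ,p)$ is trivial. Thus the genuinely new content is to check that $\sqcup_X$ descends to a well-defined, associative, commutative operation on equivalence classes for which the trivial extension is neutral. Each verification follows the pattern of the classical Baer sum for abelian extensions, the extra care being the continuity of the structure maps in the topological setting.

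First I would establish that the Baer sum is well defined on $Ext(X,\cat A)$. Given equivalences $\vp\colon(E,p)\to(E',p')$ and $\psi\colon(F,q)\to(F',q')$, the assignment $[e,f]\mapsto[\vp(e),\psi(f)]$ is well defined on $E\sqcup_X F$ because both maps are $\cal A$--equivariant, so it respects the defining relation $(e\cdot a,f)\sim(e,f\cdot a)$; it is a rack homomorphism since $\vp$ and $\psi$ are, it commutes with the projections, and it intertwines the $\cal A$--actions. Its inverse is induced in the same way from $\vp^{-1}$ and $\psi^{-1}$, so it is an equivalence $(E\sqcup_X F,p)\sim(E'\sqcup_X F',p')$.

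Commutativity follows from the swap $[e,f]\mapsto[f,e]$: under the relation $(e\cdot a,f)\sim(e,f\cdot a)$ in $E\sqcup_X F$ this maps to $(f,e\cdot a)$ and $(f\cdot a,e)$, which are identified by the relation in $F\sqcup_X E$, so the swap is a well-defined homeomorphism, manifestly a rack homomorphism compatible with projection and $\cal A$--action, and its own inverse. For associativity I would realize both $(E\sqcup_X F)\sqcup_X G$ and $E\sqcup_X(F\sqcup_X G)$ as quotients of the triple fibred product $E\times_X F\times_X G$ by the relation that transfers a factor $a\in\cal A_x$ between any two of the three coordinates; the evident map $[[e,f],g]\mapsto[e,[f,g]]$ is then a well-defined equivalence. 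For the neutral element I would use $\Theta\colon E\sqcup_X(\cat A\ltimes X)\to E$, $[e,(a,x)]\mapsto e\cdot a$, which is well defined since $[e\cdot c,(a,x)]$ and $[e,(a+c,x)]$ both map to $e\cdot(a+c)$, and which is a rack homomorphism by axioms (1) and (2) of a central extension together with $(b,x)\cdot c=(b+c,x)$; its inverse is $e\mapsto[e,(0,p(e))]$, giving $(E\sqcup_X(\cat A\ltimes X),p)\sim(E,p)$. Combined with commutativity, this shows the trivial extension is a two-sided identity.

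The main obstacle is not the algebra but the topology: at each step I must confirm that the bijections above, and their inverses, are continuous for the quotient topologies on the various $\sqcup_X$ spaces, so that they are genuine homeomorphisms and hence equivalences of central $\cat A$--extensions. Continuity of the forward maps is inherited from the defining quotient projections $E\times_X F\to E\sqcup_X F$ together with continuity of the $\cal A$--actions and rack operations, while continuity of the inverses uses the local triviality supplied by the local sections of $p$, which locally identifies each extension with $U\times\cal A$ and reduces every map to a continuous fibrewise formula. Once these continuity checks are in place, the axioms verified above exhibit $(Ext(X,\cat A),\sqcup_X)$ as an abelian group with the stated neutral element and inverses.
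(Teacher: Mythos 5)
Your proposal is correct and follows the same route the paper intends: the corollary is stated without proof, as an immediate consequence of the Baer sum construction, the trivial extension, and the proposition that $(E\sqcup_X E^\circ,p)$ is trivial. Your write-up simply supplies the standard verifications (well-definedness on classes, commutativity, associativity, neutrality of $(\cat A\ltimes X,\tilde\pi)$, and the continuity checks via local sections) that the paper leaves implicit, and these are all carried out correctly.
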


A general theory of continuous cohomology of topological quandles is being developed by the authors in~\cite{Elhamdadi-Moutuou:Continuous_Cohomology}.


\end{document}